\theoremstyle{plain}
\newtheorem{theo}{Theorem}
\newtheorem{lm}[theo]{Lemma}
\newtheorem*{q}{Question}
\newtheorem*{Claim}{Claim}
\newtheorem*{defin}{Definition}
\newtheorem{cor}[theo]{Corollary}
\newtheorem{prop}[theo]{Proposition}
\newtheorem*{example}{Example}
\newtheorem*{lm*}{Lemma}
\DeclareMathOperator{\Ha}{H} 
\DeclareMathOperator{\cone}{cone} 
\DeclareMathOperator{\conv}{conv} 
\DeclareMathOperator{\PW}{PW}
\DeclareMathOperator{\supp}{supp} 
\DeclareMathOperator{\ext}{ext} 
\title{On the failure of the Nehari Theorem for Paley-Wiener spaces}
\author{Konstantinos Bampouras}
\address{Department of Mathematical Sciences, Norwegian University of Sciences and Technology (NTNU), NO-7491 Trondheim, Norway}
\email{konstantinos.bampouras@ntnu.no}
\keywords{Nehari theorem, Hankel operators, Paley-Wiener spaces}
\subjclass[2020]{47B35}
\date{\today}
\begin{document}
	\maketitle
	\begin{abstract}
	  Let $\Omega$ be a nonempty, open and convex subset of $\mathbb{R}^{n}$. The Paley-Wiener space with respect to $\Omega$ is defined to be the closed subspace of $L^{2}(\mathbb{R}^{n})$ of functions with Fourier transform supported in $\Omega$. For a tempered distribution $\phi$, we define a Hankel operator to be the densely defined operator given by
	 $$\widehat{\Ha_{\phi}f}(x)=\int_{\Omega}\widehat{f}(y)\widehat{\phi}(x+y)dy,\text{   for $x\in\Omega$}.$$ We say that the Nehari theorem is true for $\Omega$, if every bounded Hankel operator is generated by a bounded function. In this paper we prove that the Nehari theorem fails for any convex set in $\mathbb{R}^{n}$ that has infinitely many extreme points. In particular, it fails for all convex bounded sets which are not polytopes. Furthermore, in the setting of $\mathbb{R}^{2}$, it fails for all non-polyhedral sets, bounded or unbounded.
	\end{abstract}

\section{Introduction}
\par
 Let $\Omega$ be a non-empty convex and open subset of $\mathbb{R}^{n}$. Given a function $f\in L^{1}(\mathbb{R}^{n})$ we will denote by $\widehat{f}$ or $\mathscr{F}(f)$ the Fourier transform of $f$, i.e. the function
 $$ \widehat{f}(x)=\int_{\mathbb{R}^{n}}f(y)e^{-2\pi ixy}dy,\quad\text{$x\in\mathbb{R}^{n}$}.$$ It is known that the Fourier transform can be extended to an isometric isomorphism of $L^{2}(\mathbb{R}^{n})$ onto itself that maps the Schwartz space onto itself and has inverse $\mathcal{F}^{-1}(f)(x)=\widehat{f}(-x)$. The Paley-Wiener space with respect to some open set $\Omega\subset\mathbb{R}^{n}$ is defined as the closed subspace of $L^{2}(\mathbb{R}^{n})$ whose Fourier transform is supported in $\Omega$. Namely, $$\PW(\Omega)=\{f\in L^{2}(\mathbb{R}^{n}):\supp\widehat{f}\subset\Omega\}.$$ For a tempered distribution $\phi\in S'(\mathbb{R}^{n})$ we define the Hankel operator on $\PW(\Omega)$ to be the operator given by $$\widehat{\Ha_{\phi}f}(x)=\int_{\Omega}\widehat{f}(y)\widehat{\phi}(x+y)dy,\text{   for $x\in\Omega$},$$
which is densely defined on smooth functions with compact Fourier support in $\Omega$. Every bounded function $\phi$ generates a bounded Hankel operator on $\PW(\Omega)$, and moreover, the estimate  $\|\Ha_{\phi}\|\leq \|\phi\|_{L^{\infty}}$ holds. Thus a very natural question arises: 
\begin{q}
	Is every bounded Hankel operator generated by a bounded function?
\end{q}
We say that the Nehari theorem is true for $\PW(\Omega)$ if for any $\phi\in S'(\mathbb{R}^{n})$ that generates a bounded Hankel operator $\Ha_{\phi}$ on $\PW(\Omega)$ there exists a bounded function $\psi$, such that $\widehat{\phi}=\widehat{\psi}$ in $2\Omega$, and thus $\Ha_{\phi}=\Ha_{\psi}$. 
\par 
This problem was originally stated for the Hardy space $\mathcal{H}^{2}$ of the two dimensional unit disc. Assume $\{a_{n}\}_{n\in\mathbb{N}_{0}}\subset\mathbb{C}$ and define the Hankel operator $\Ha:\mathcal{H}^{2}\to\mathcal{H}^{2}$ with infinite matrix $(a_{n+m})_{n,m\geq 0}$ with respect to the basis $\{z^{n}:n\in \mathbb{N}_{0}\}$ of $\mathcal{H}^{2}$. Z. Nehari \cite{MR82945} proved that this operator is bounded if and only if there exists $\phi\in L^{\infty}(\mathbb{T})$ such that $\widehat{\phi}(n)=a_{n}$ for all $n\geq 0$. Kronecker \cite{MR0237286} gave a characterization of Hankel operators of finite rank. P. Hartman \cite{MR108684} characterized the compact Hankel operators on $\mathcal{H}^{2}$ as all the operators generated by continuous functions on $\mathbb{T}$. V. Peller \cite{MR1949210} and R. Rochberg \cite{MR674875} characterized the Schatten von-Neumann classes for all $p\geq 1$ of Hankel operators, and V. Peller for all $p>0$ \cite{MR1949210}.
\par
 These results were translated to Paley-Wiener spaces by R. Rochberg \cite{MR878246} on a bounded interval on $\mathbb{R}.$ Hankel operators in several variables of the type studied here were introduced by F. Andersson and M. Carlsson in \cite{MR3355784}, \cite{MR3626673}, who among other things characterized their finite rank structure. L. Peng \cite{MR953994},\cite{MR1001657} showed that the Schatten von-Neumann classes of Hankel operators on the two dimensional unit disc and the $n-$dimensional cube $(-\pi,\pi)^{n}$ are generated by distributions in Besov spaces. For a set $\Omega\subset\mathbb{R}^{n}$, the Besov spaces $B_{p}(\Omega)$, $p>0$ on Paley-Wiener space of $\Omega$ are defined with respect to a decomposition and a partition of unity of $\Omega$. L. Peng showed that for $(-\pi,\pi)^{n}$ it is true that $\Ha_{\phi}\in S_{p}$ if and only if $\phi\in B_{p}(2\Omega)$ for all $p>0$. For the two dimensional disc he proved an analogous result for all $p\in[1,2]$ and for $p\geq 2$ he proved necessity. 
 \par
  A simple polytope in $\mathbb{R}^{n}$ is defined to be a polytope such that every vertex belongs to exactly $n$ edges (an edge is an $n-1$ dimensional intersection of a hyperplane with the convex set). K. Perfekt and M. Carlsson \cite{MR0237286} proved that the Nehari theorem  holds for any convex simple polytope in $\mathbb{R}^{n}$, using as a blackbox that it holds for $\mathbb{R}^{n}_{+}$. It was recently discovered that the validity of the Nehari theorem for $\mathbb{R}_{+}^{n}$ is actually an open question \cite{MR4234971}; neither it is known whether there exists a polytope in $\mathbb{R}^{n}$, $n>1$ such that the Nehari theorem holds. For the unit disc in two dimensions it was disproven by K. Perfekt and O. Brevig \cite{MR4502777}. 
 \par 
 The main goal of this paper is to prove that the Nehari theorem fails for all bounded convex non-polytope domains and we achieve that by using convex analysis to generalize the argument of \cite{MR4502777}. Furthermore, we prove that in two dimensions the Nehari theorem fails for all non-polyhedral sets, i.e. sets that are not a finite intersection of half-spaces (see Section \ref{sec 3}).
 \subsection*{Acknowledgements} The author would like to thank his supervisor, Karl-Mikael Perfekt, for his guidance and the interesting conversations on this topic. He would also like to thank the anonymous referee for valuable comments that helped to improve the paper.
 	\section{Preliminary Lemmas}
 Throughout this section, $\Omega$ will be a non-empty, open and convex subset of $\mathbb{R}^{n}$. Now let us assume that the Nehari theorem holds for $\Omega$. By an open mapping theorem argument, we can find $C>0$ (Lemma \ref{lemma 1}), such that $\inf\{\|\psi\|_{L^{\infty}}:\widehat{\psi}|_{2\Omega}=\widehat{\phi}|_{2\Omega}\}\leq C\|\Ha_{\phi}\|$. 
 \begin{defin}
 	For a Schwartz function $\phi$ we will use the following notation:
 	$$D_{\phi}=\{x\in\Omega:x+y\in \supp\widehat{\phi},\text{ for some $y\in\Omega$}\}=(\supp\widehat{\phi}-\Omega)\cap\Omega.$$
 \end{defin}
 Observe that if $\phi_{1}$ and $\phi_{2}$ have disjoint $D_{\phi_{1}}$ and $D_{\phi_{2}}$, then $$\|\Ha_{\phi_{1}+\phi_{2}}\|=\max(\|\Ha_{\phi_{1}}\|,\|\Ha_{\phi_{2}}\|) \quad (\text{Lemma \ref{lemma 2}}).$$ Using this, we construct a sequence of functions $\phi_{n}$, such that $\psi_{n}=\phi_{1}+...+\phi_{n}$ satisfies
$$\dfrac{\inf\{\|\psi\|_{L^{\infty}}:\widehat{\psi}|_{2\Omega}=\widehat{\psi_{n}}|_{2\Omega}\}}{\|\Ha_{\psi_{n}}\|}\to\infty,$$ which disproves the Nehari theorem. The construction of these functions is based on the existence of infinitely many exposed points in $\overline{\Omega}$ (boundary points with supporting hyperplane that intersects the set only once). 
\begin{example}
	In the case of $\Omega$ being the unit disc $\mathbb{D}=\{z\in\mathbb{C}:|z|<1\}$ and $\supp\widehat{\phi}=\overline{D}(z,r)$ the disc centered at $z\in\mathbb{C}$ with radius $r>0$, a simple computation gives $D_{\phi}=\mathbb{D}\cap D(z,1+r)$. If we take $z$ to approach a point $w$ in the boundary of $2\mathbb{D}$ and $r\to 0$, then $D_{\phi}$ approaches the singleton set $\{\frac{1}{|w|}w\}.$ 
\end{example}
A key idea in \cite{MR4502777} is that the boundary of $2\mathbb{D}$ has infinitely many points with the property that $D_{\phi}$ can approach a singleton.
The following example shows that there are convex sets with only finitely many such points.
  
\begin{example}
	Let us consider now $\Omega=(-1,1)^{n}$ and $\supp\widehat{\phi}=[x_{1},y_{1}]\times...\times [x_{n},y_{n}]$. Then we can calculate $$D_{\phi}=(-1,1)^{n}\cap (x_{1}-1,y_{1}+1)\times...\times(x_{n}-1,y_{n}+1).$$ Let us choose a point in the boundary of $[-2,2]^{n}$ that is not a vertex, say $z=(z_{1},...,z_{n})$. Then it will have a coordinate $-2<z_{i}<2$ and thus if we take the center of the cube $ [x_{1},y_{1}]\times...\times [x_{n},y_{n}]$ to approach $z$ and its diameter to approach to zero, then the $i-th$ coordinate of $D_{\phi}$ will approach the line segment $(-1,1)\cap (z_{i}-1,z_{i}+1),$ which is not a singleton. 
\end{example}
\par The difference between the two examples is that the cube has finitely many extreme points in contrast with the disc that has infinitely many. Polytopes are the only bounded convex sets with finitely many extreme points (see Section \ref{sec 3}). Now we are going to present some technical lemmas that we will use for our theorem's proof. 
	\begin{lm} \label{lemma 1}
	If the Nehari theorem is true for $\Omega$, then there exists $C>0$ such that 
	$$\inf\{\|\psi\|_{L^{\infty}}:\widehat{\psi}|_{2\Omega}=\widehat{\phi}|_{2\Omega}\}\leq C\|\Ha_{\phi}\|$$
	\end{lm}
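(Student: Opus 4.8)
The plan is to deduce this from the closed graph / open mapping theorem applied to a suitable Banach space map. The statement "the Nehari theorem holds for $\Omega$" says that the obvious bounded linear map, from symbols to Hankel operators, is surjective onto the bounded Hankel operators; the assertion to prove is the quantitative counterpart, that the induced map on quotient spaces is bounded below.

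\begin{proof}[Proof sketch]
Let $X$ denote the quotient space $L^{\infty}(\mathbb{R}^{n})/N$, where $N=\{\psi\in L^{\infty}(\mathbb{R}^{n}):\widehat{\psi}|_{2\Omega}=0\}$; since $N$ is a weak-$*$ closed (hence norm-closed) subspace of $L^{\infty}$, $X$ is a Banach space with the quotient norm $\|[\psi]\|_{X}=\inf\{\|\psi'\|_{L^{\infty}}:\psi'-\psi\in N\}$. Let $Y$ be the space of bounded Hankel operators on $\PW(\Omega)$, equipped with the operator norm; $Y$ is a closed subspace of the bounded operators on $\PW(\Omega)$ (a norm limit of Hankel operators is Hankel, as one checks on the dense domain of smooth compactly Fourier-supported functions), hence a Banach space. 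The assignment $\psi\mapsto \Ha_{\psi}$ factors through $N$ — because $\Ha_{\psi}$ depends only on $\widehat{\psi}|_{2\Omega}$, by the defining formula $\widehat{\Ha_{\psi}f}(x)=\int_{\Omega}\widehat{f}(y)\widehat{\psi}(x+y)\,dy$ with $x+y\in 2\Omega$ — and so descends to a linear map $T:X\to Y$, $T[\psi]=\Ha_{\psi}$. The inequality $\|\Ha_{\psi}\|\leq\|\psi\|_{L^{\infty}}$ recalled in the introduction passes to the infimum, giving $\|T[\psi]\|_{Y}\leq\|[\psi]\|_{X}$, so $T$ is bounded with $\|T\|\leq 1$.

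By hypothesis the Nehari theorem holds for $\Omega$: every $\phi\in S'(\mathbb{R}^{n})$ generating a bounded $\Ha_{\phi}$ has $\widehat{\phi}=\widehat{\psi}$ on $2\Omega$ for some $\psi\in L^{\infty}$, whence $T[\psi]=\Ha_{\psi}=\Ha_{\phi}$; thus $T$ is surjective. A bounded surjective linear map between Banach spaces is open, so there is $c>0$ with $c\,B_{Y}\subseteq T(B_{X})$, equivalently every $\eta\in Y$ admits $[\psi]\in X$ with $T[\psi]=\eta$ and $\|[\psi]\|_{X}\leq c^{-1}\|\eta\|_{Y}$. Applying this with $\eta=\Ha_{\phi}$ and setting $C=c^{-1}$ yields
\[
\inf\{\|\psi\|_{L^{\infty}}:\widehat{\psi}|_{2\Omega}=\widehat{\phi}|_{2\Omega}\}=\|[\psi]\|_{X}\leq C\|\Ha_{\phi}\|,
\]
which is the claim.
\end{proof}

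\emph{Where the work is.} The open mapping step itself is completely standard once the two endpoints are genuine Banach spaces; the points that deserve a line of justification are (i) that $N$ is norm-closed in $L^{\infty}$ (clear, since $\psi\mapsto\widehat{\psi}|_{2\Omega}$ is continuous from $L^{\infty}$ into distributions on $2\Omega$ and $\{0\}$ is closed there), so the quotient norm is an honest Banach-space norm realized as the stated infimum; (ii) that $Y$, the set of symbols-generated bounded Hankel operators, is norm-closed in $\mathcal{B}(\PW(\Omega))$, so that the open mapping theorem applies to $T:X\twoheadrightarrow Y$; and (iii) the well-definedness of $T$ on $X$, i.e. that $\Ha_{\psi}$ depends only on $\widehat{\psi}|_{2\Omega}$, which is immediate from the integral formula since the argument $x+y$ ranges over $2\Omega=\Omega+\Omega$. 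None of these is hard, but (ii) is the one most worth stating carefully, and it is the only place where one uses that Hankel operators form a closed subspace rather than merely a linear subspace.
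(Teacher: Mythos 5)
Your proposal is correct and follows essentially the same route as the paper: both pass to the quotient Banach space $L^{\infty}(\mathbb{R}^{n})/N$ with the infimum norm, use that the bounded Hankel operators form a closed (hence Banach) subspace of $\mathcal{B}(\PW(\Omega))$ --- a fact the paper outsources to a cited proposition rather than sketching --- and then invoke the open mapping/bounded inverse theorem on the surjection $[\psi]\mapsto \Ha_{\psi}$. The only cosmetic difference is that the paper phrases the last step as ``bijective and continuous, hence bounded inverse,'' while you use surjectivity and openness directly; these are interchangeable here.
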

\begin{proof}
	Let $\phi\sim\psi$ if and only if $\widehat{\psi}|_{2\Omega}=\widehat{\phi}|_{2\Omega}$. Then $\sim$ is an equivalence relation on $L^{\infty}(\mathbb{R}^{n})$.
	 We define a norm on $L^{\infty}(\mathbb{R}^{n})/\sim$ as follows:
	$$\|[\phi]\|:=\inf\{\|\psi\|_{L^{\infty}}:\psi\sim\phi\},$$ which makes $L^{\infty}/\sim$ a Banach space.
	From \cite[Proposition 5.1]{MR4227573} the set of Hankel operators $\mathscr{H}$ is a closed subspace of the bounded operators on $\PW(\Omega)$ and thus, a Banach space. The Nehari theorem says that the operator $T:L^{\infty}(\mathbb{R}^{n})/_{\sim}\to \mathscr{H}$ with $T(\phi)=\Ha_{\phi}$ is bijective and continuous, and thus the inverse operator is bounded.
\end{proof}

\begin{lm} \label{lemma 2}
	Let $\phi=\phi_{1}+\phi_{2}$, such that $D_{\phi_{1}}\cap D_{\phi_{2}}=\emptyset.$ Then
	 $$\|\Ha_{\phi}\|=\max\{\|\Ha_{\phi_{1}}\|,\|\Ha_{\phi_{2}}\|\}.$$
\end{lm}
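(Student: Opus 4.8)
The plan is to reduce the computation of $\|\Ha_\phi\|$ to a statement about orthogonal decompositions. The key observation is that the Hankel operator $\Ha_\phi$ only ``sees'' the values of $\widehat f$ on $D_\phi$: if $f \in \PW(\Omega)$ has $\widehat f$ supported away from $D_\phi$, then for every $x \in \Omega$ the integrand $\widehat f(y)\widehat\phi(x+y)$ vanishes, because $\widehat\phi(x+y)\neq 0$ forces $y \in (\supp\widehat\phi - x)\cap\Omega \subset D_\phi$. Dually, the image $\widehat{\Ha_\phi f}$ is supported in $(\supp\widehat\phi - \Omega)\cap\Omega = D_\phi$ as well, by the same reasoning applied to the output variable $x$. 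So if we let $P_i$ denote the orthogonal projection of $\PW(\Omega)$ onto the subspace of functions with Fourier support in $D_{\phi_i}$, then $\Ha_{\phi_i} = P_i \Ha_{\phi_i} = \Ha_{\phi_i} P_i$, and moreover $\Ha_{\phi_j} P_i = 0$ for $i \neq j$ since $D_{\phi_1} \cap D_{\phi_2} = \emptyset$.

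Granting this, the argument is short. First I would record the support facts above as the technical core. Then, since $D_{\phi_1}$ and $D_{\phi_2}$ are disjoint, $P_1$ and $P_2$ have orthogonal ranges, so $P_1 + P_2$ is the orthogonal projection onto functions supported in $D_{\phi_1} \cup D_{\phi_2}$. For an arbitrary $f$ in the dense domain, write $f_i = P_i f$. Then $\Ha_\phi f = \Ha_{\phi_1} f + \Ha_{\phi_2} f = \Ha_{\phi_1} f_1 + \Ha_{\phi_2} f_2$, and the two summands have Fourier supports in the disjoint sets $D_{\phi_1}$ and $D_{\phi_2}$, hence are orthogonal. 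Therefore
\[
\|\Ha_\phi f\|^2 = \|\Ha_{\phi_1} f_1\|^2 + \|\Ha_{\phi_2} f_2\|^2 \le \|\Ha_{\phi_1}\|^2 \|f_1\|^2 + \|\Ha_{\phi_2}\|^2 \|f_2\|^2 \le M^2\bigl(\|f_1\|^2 + \|f_2\|^2\bigr) \le M^2 \|f\|^2,
\]
where $M = \max\{\|\Ha_{\phi_1}\|, \|\Ha_{\phi_2}\|\}$, using $\|f_1\|^2 + \|f_2\|^2 = \|(P_1+P_2)f\|^2 \le \|f\|^2$. This gives $\|\Ha_\phi\| \le M$.

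For the reverse inequality $\|\Ha_\phi\| \ge M$, it suffices to test $\Ha_\phi$ on functions $f$ with $\widehat f$ supported in a single $D_{\phi_i}$: for such $f$ we have $\Ha_\phi f = \Ha_{\phi_i} f$ since the other Hankel operator annihilates $f$, so $\|\Ha_\phi f\| = \|\Ha_{\phi_i} f\|$. Taking a supremum over unit vectors of this form and over $i \in \{1,2\}$ yields $\|\Ha_\phi\| \ge \max\{\|\Ha_{\phi_1}\|, \|\Ha_{\phi_2}\|\}$. Combining the two inequalities finishes the proof.

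The only genuinely delicate point is the justification of the support claims at the level of the densely-defined operator: one must check that the identities $\supp\widehat{\Ha_{\phi_i}f} \subset D_{\phi_i}$ and $\Ha_{\phi_i} f = \Ha_{\phi_i}(P_i f)$ hold for $f$ in the domain (smooth, compact Fourier support in $\Omega$), and that $P_i f$ still lies in a space on which the relevant manipulations are valid; a mild subtlety is that $P_i f$ need not have compact Fourier support, so one should either argue by a density/continuity approximation or note that the integral defining $\Ha_{\phi_i}$ extends continuously. I expect this bookkeeping with the convolution integral and the supports to be the main obstacle, while the Hilbert-space orthogonality computation is routine.
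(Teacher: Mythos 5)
Your proof is correct and follows essentially the same route as the paper: both arguments rest on the two support observations that $\Ha_{\phi_i}$ depends only on $\widehat f|_{D_{\phi_i}}$ and maps into functions with Fourier support in $D_{\phi_i}$, so that $\Ha_\phi$ decomposes as an orthogonal direct sum over the disjoint sets $D_{\phi_1}$ and $D_{\phi_2}$. The paper phrases this as $T=T_1\oplus T_2$ on $\PW(D_{\phi_1})\oplus\PW(D_{\phi_2})=\PW(D_{\phi_1}\cup D_{\phi_2})$, while you phrase it with projections and the Pythagorean identity; the content is the same.
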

Note that by induction, this lemma extends to finite sums.
	\begin{proof}
	Let us define $T_{i}\in B(\PW(D_{\phi_{i}}))$ by $T_{i}f=\Ha_{\phi_{i}}(f),$ $i=1,2$ and $T\in B(\PW(D_{\phi_{1}}\cup D_{\phi_{2}}))$ by $Tf=\Ha_{\phi}(f)$. First, let us observe that $\|T_{i}\|=\|\Ha_{\phi_{i}}\|$ and $\|T\|=\|\Ha_{\phi}\|$. This is because:
	\begin{eqnarray} 
	\|\Ha_{\phi_{i}}\|&=&\sup_{f\in \PW(\Omega)}\dfrac{\|\Ha_{\phi_{i}}f\|_{L^{2}}}{\|f\|_{L^{2}}}=\sup_{f\in \PW(\Omega)}\dfrac{\|\Ha_{\phi_{i}}(f\ast\mathscr{F}^{-1}(\chi_{D_{\phi_{i}}}))\|_{L^{2}}}{\|\widehat{f}\|_{L^{2}}}\nonumber \\
	&\leq&\sup_{f\in \PW(\Omega)}\dfrac{\|T_{i}(f\ast\mathscr{F}^{-1}(\chi_{D_{\phi_{i}}}))\|_{L^{2}}}{\|\widehat{f}\chi_{D_{\phi_{i}}}\|_{L^{2}}}=\sup_{f\in \PW(\Omega)}\dfrac{\|T_{i}(f\ast\mathscr{F}^{-1}(\chi_{D_{\phi_{i}}}))\|_{L^{2}}}{\|f\ast\mathscr{F}^{-1}(\chi_{D_{\phi_{i}}})\|_{L^{2}}}\nonumber \\
	&=&\sup_{f\in \PW(D_{\phi_{i}})}\dfrac{\|T_{i}(f)\|_{L^{2}}}{\|f\|_{L^{2}}}=\|T_{i}\|.\nonumber 
	\end{eqnarray}
    The converse inequality follows from the fact that $\PW(D_{\phi_{i}})\subset \PW(\Omega)$. Using the same arguments we prove the same result for $T$. Now since $\PW(D_{\phi_{1}})\oplus \PW(D_{\phi_{2}})=\PW(D_{\phi_{1}}\cup D_{\phi_{1}})$ and 
    for $f=f_{1}+f_{2}$ being the decomposition of $f$ with respect to the direct sum. Then $Tf=T_{1}f_{1}+T_{2}f_{2}$ and thus $T=T_{1}\oplus T_{2}$. This implies that $\|T\|=\max(\|T_{1}\|,\|T_{2}\|),$ as desired.
\end{proof}
	\begin{lm} \label{lemma 3}
		Let $\widehat{f}\in C_{c}(\mathbb{R}^{n})$ be such that $\supp\widehat{f}\subset2\Omega$. Then 
	    $$\dfrac{|\langle\widehat{f},\widehat{\phi}\rangle|}{\|f\|_{L^{1}}}\leq \inf\{\|\psi\|_{L^{\infty}}:\widehat{\psi}|_{2\Omega}=\widehat{\phi}|_{2\Omega}\}.$$
	\end{lm}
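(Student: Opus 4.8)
The plan is to undo the two Fourier transforms via Parseval's formula and then estimate by H\"older's inequality. We may assume the infimum on the right-hand side is finite, so fix some $\psi\in L^{\infty}(\mathbb{R}^{n})$ with $\widehat{\psi}|_{2\Omega}=\widehat{\phi}|_{2\Omega}$ (if no such $\psi$ exists the inequality is vacuous). The first thing to check is that $\langle\widehat{f},\widehat{\phi}\rangle=\langle\widehat{f},\widehat{\psi}\rangle$, i.e. that the left-hand side depends only on $\widehat{\phi}|_{2\Omega}$. Since $K:=\supp\widehat{f}$ is a compact subset of the open set $2\Omega$, one can choose $\chi\in C_{c}^{\infty}(2\Omega)$ with $\chi\equiv 1$ on $K$; then $\widehat{f}=\chi\widehat{f}$, and because $\widehat{\phi}$ and $\widehat{\psi}$ agree as distributions on $2\Omega\supseteq\supp\chi$ we get $\chi\widehat{\phi}=\chi\widehat{\psi}$, hence
$$\langle\widehat{f},\widehat{\phi}\rangle=\langle\widehat{f},\chi\widehat{\phi}\rangle=\langle\widehat{f},\chi\widehat{\psi}\rangle=\langle\widehat{f},\widehat{\psi}\rangle .$$

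Next I would apply Parseval's relation. Using that the Fourier transform is its own inverse up to reflection, $\mathscr{F}\mathscr{F}h(x)=h(-x)$, and interpreting the pairing of the tempered distribution $\widehat{\psi}$ against $\widehat{f}$ as $\langle\psi,\mathscr{F}\widehat{f}\rangle$, the quantity $\langle\widehat{f},\widehat{\psi}\rangle$ equals $\int_{\mathbb{R}^{n}}\psi(x)f(-x)\,dx$ up to an inessential conjugation. H\"older's inequality then gives $|\langle\widehat{f},\widehat{\psi}\rangle|\le\|\psi\|_{L^{\infty}}\|f\|_{L^{1}}$. Combining this with the previous step, $|\langle\widehat{f},\widehat{\phi}\rangle|\le\|\psi\|_{L^{\infty}}\|f\|_{L^{1}}$; dividing by $\|f\|_{L^{1}}$ and taking the infimum over all admissible $\psi$ yields the claim.

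The one step needing some care --- and the main, though routine, obstacle --- is Parseval's identity when $\widehat{f}$ is only continuous with compact support rather than smooth: the identity $\langle\widehat{\psi},\widehat{f}\rangle=\langle\psi,\mathscr{F}\widehat{f}\rangle$ is immediate for $\widehat{f}\in C_{c}^{\infty}$ (and in the applications of this lemma $\widehat{f}$ may be taken smooth), while the general case follows by mollifying $\widehat{f}$ inside $2\Omega$ and passing to the limit in the two pairings involved. Everything else is a direct computation.
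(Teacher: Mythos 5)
Your proof is correct and takes essentially the same route as the paper's: replace $\widehat{\phi}$ by $\widehat{\psi}$ using that $\supp\widehat{f}$ lies in $2\Omega$, apply Parseval to reduce to $\langle f,\psi\rangle$, estimate by H\"older, and take the infimum over $\psi$. The paper simply writes this as a one-line chain of (in)equalities, while you supply the routine justifications (cutoff function, mollification) explicitly.
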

\begin{proof}
	$$\dfrac{|\langle\widehat{f},\widehat{\phi}\rangle|}{\|f\|_{L^{1}}}=\dfrac{|\langle\widehat{f},\widehat{\psi}\rangle|}{\|f\|_{L^{1}}}=\dfrac{|\langle f, \psi\rangle|}{\|f\|_{L^{1}}}\leq \dfrac{\|f\|_{L^{1}}\|\psi\|_{L^{\infty}}}{\|f\|_{L^{1}}}=\|\psi\|_{L^{\infty}}.$$
	Taking infimum over all such $\psi$, we get the result.
\end{proof}
The next proposition systematizes the approach of the main idea from \cite{MR4502777}.
	\begin{prop} \label{prop 1}
		If for every $k\in\mathbb{N}$ we can find $y_{i}\in2\Omega$, $i=1,...,k$ and $r>0$ such that 
		$$(\overline{B}(y_{i},r)-\Omega)\cap (\overline{B}(y_{j},r)-\Omega)\cap \Omega=\emptyset,$$ for all $i\neq j$, then the Nehari theorem fails for $\Omega$. 
	\end{prop}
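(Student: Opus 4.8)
The plan is to argue by contradiction. Assume the Nehari theorem holds for $\Omega$, so that by Lemma \ref{lemma 1} there is a constant $C>0$ with $\|[\phi]\|\le C\|\Ha_{\phi}\|$ for every Schwartz $\phi$, where $\|[\phi]\|=\inf\{\|\psi\|_{L^{\infty}}:\widehat{\psi}|_{2\Omega}=\widehat{\phi}|_{2\Omega}\}$. I will exhibit, for each $k\in\mathbb{N}$, a Schwartz function $\psi_{k}$ with $\|[\psi_{k}]\|/\|\Ha_{\psi_{k}}\|\ge c\sqrt{k}$ for a fixed $c>0$; letting $k\to\infty$ contradicts the previous inequality, which proves the proposition.

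Fix once and for all $\chi\in C_{c}^{\infty}(\mathbb{R}^{n})$ with $\chi\not\equiv 0$ and $\supp\chi\subset B(0,1/2)$, and set $\eta=\chi*\widetilde{\chi}$ with $\widetilde{\chi}(x)=\overline{\chi(-x)}$. Then $\eta\in C_{c}^{\infty}$, $\supp\eta\subset B(0,1)$, and $\mathscr{F}^{-1}(\eta)=|\mathscr{F}^{-1}(\chi)|^{2}\ge 0$. Given $k$, apply the hypothesis to obtain $y_{1},\dots,y_{k}\in 2\Omega$ and $r>0$ with the sets $(\overline{B}(y_{i},r)-\Omega)\cap\Omega$ pairwise disjoint. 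Decreasing $r$ only shrinks these sets, so we may replace $r$ by some $r_{k}>0$ for which, in addition, the balls $\overline{B}(y_{i},r_{k})$ are pairwise disjoint and contained in $2\Omega$; in particular $|y_{i}-y_{j}|/r_{k}>2$ whenever $i\ne j$. Let $\phi_{i}$ be the Schwartz function with $\widehat{\phi_{i}}(\xi)=\eta\big((\xi-y_{i})/r_{k}\big)$, so that $\phi_{i}(x)=r_{k}^{n}e^{2\pi i xy_{i}}\mathscr{F}^{-1}(\eta)(r_{k}x)$, and put $\psi_{k}=\sum_{i=1}^{k}\phi_{i}$. Since $\supp\widehat{\phi_{i}}\subset\overline{B}(y_{i},r_{k})$, the definition of $D_{\phi_{i}}$ gives $D_{\phi_{i}}\subset(\overline{B}(y_{i},r_{k})-\Omega)\cap\Omega$, so the $D_{\phi_{i}}$ are pairwise disjoint, and Lemma \ref{lemma 2} (iterated) together with the bound $\|\Ha_{\phi}\|\le\|\phi\|_{L^{\infty}}$ from the introduction gives
$$\|\Ha_{\psi_{k}}\|=\max_{1\le i\le k}\|\Ha_{\phi_{i}}\|\le\max_{1\le i\le k}\|\phi_{i}\|_{L^{\infty}}=r_{k}^{n}\,\|\mathscr{F}^{-1}(\eta)\|_{L^{\infty}}.$$

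For the matching lower bound I will apply Lemma \ref{lemma 3} with $\widehat{f}=\widehat{\psi_{k}}=\sum_{i}\widehat{\phi_{i}}$, which lies in $C_{c}^{\infty}$ and is supported in $2\Omega$; this yields $\|[\psi_{k}]\|\ge\|\psi_{k}\|_{L^{2}}^{2}/\|\psi_{k}\|_{L^{1}}$. The Fourier transforms $\widehat{\phi_{i}}$ have disjoint supports, so $\|\psi_{k}\|_{L^{2}}^{2}=\sum_{i}\|\phi_{i}\|_{L^{2}}^{2}=k\,r_{k}^{n}\,\|\eta\|_{L^{2}}^{2}$. For the $L^{1}$-norm, $\psi_{k}(x)=r_{k}^{n}\mathscr{F}^{-1}(\eta)(r_{k}x)\sum_{i}e^{2\pi i xy_{i}}$, so after rescaling $\|\psi_{k}\|_{L^{1}}=\int\mathscr{F}^{-1}(\eta)(y)\big|\sum_{i}e^{2\pi i y\cdot(y_{i}/r_{k})}\big|\,dy$; by Cauchy--Schwarz against the nonnegative weight $\mathscr{F}^{-1}(\eta)$ this is at most $\big(\int\mathscr{F}^{-1}(\eta)\big)^{1/2}\big(\int\mathscr{F}^{-1}(\eta)\,\big|\sum_{i}e^{2\pi i y\cdot(y_{i}/r_{k})}\big|^{2}\,dy\big)^{1/2}$, and since the Fourier transform of $\mathscr{F}^{-1}(\eta)$ equals $\eta$, which is supported in $B(0,1)$, while the frequencies $y_{i}/r_{k}$ are more than $1$-separated, the off-diagonal contributions to the second integral vanish and it equals $k\,\eta(0)$, while $\int\mathscr{F}^{-1}(\eta)=\eta(0)$. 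Hence $\|\psi_{k}\|_{L^{1}}\le\eta(0)\sqrt{k}$ and therefore $\|[\psi_{k}]\|\ge\|\eta\|_{L^{2}}^{2}\,r_{k}^{n}\,\sqrt{k}/\eta(0)$. Dividing this by the bound on $\|\Ha_{\psi_{k}}\|$ the powers of $r_{k}$ cancel, leaving $\|[\psi_{k}]\|/\|\Ha_{\psi_{k}}\|\ge c\sqrt{k}$ with $c=\|\eta\|_{L^{2}}^{2}/\big(\eta(0)\,\|\mathscr{F}^{-1}(\eta)\|_{L^{\infty}}\big)>0$, the desired contradiction once $k$ is large.

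The step I expect to be delicate is the handling of the scale $r_{k}$. The hypothesis only asserts that \emph{some} $r$ makes the sets disjoint, and separating the balls $\overline{B}(y_{i},r_{k})$ may force $r_{k}$ to tend to $0$ arbitrarily rapidly in $k$; hence no bound on $\|\Ha_{\phi_{i}}\|$ that sees the measure of $D_{\phi_{i}}$ (such as a Hilbert--Schmidt estimate) is usable, since $|D_{\phi_{i}}|$ cannot be controlled against $r_{k}^{n}$. The point of the argument above is to use only the scale-stable estimate $\|\Ha_{\phi_{i}}\|\le\|\phi_{i}\|_{L^{\infty}}$, whose $r_{k}$-dependence is the same $r_{k}^{n}$ that appears in the lower bound for $\|[\psi_{k}]\|$, so that all genuine growth comes from the $L^{1}$--$L^{2}$ discrepancy of the exponential sum $\sum_{i}e^{2\pi i xy_{i}}$; choosing $\eta$ as an autocorrelation is exactly what makes $\mathscr{F}^{-1}(\eta)$ nonnegative and its Fourier transform compactly supported, which is what turns that discrepancy into the clean factor $\sqrt{k}$ once the rescaled frequencies are separated.
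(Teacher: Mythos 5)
Your proof is correct, and it follows the same skeleton as the paper's: Lemma \ref{lemma 1} for the a priori bound, bump functions $\widehat{\phi_i}$ supported in small balls $\overline{B}(y_i,r)\subset 2\Omega$, Lemma \ref{lemma 2} to reduce $\|\Ha_{\psi_k}\|$ to a single bump, and Lemma \ref{lemma 3} applied to $\widehat{f}=\widehat{\psi_k}$ to lower-bound the infimum by $\|\psi_k\|_{L^2}^2/\|\psi_k\|_{L^1}$. Where you genuinely diverge is in the key quantitative step, the upper bound on $\|\psi_k\|_{L^1}$. The paper takes an arbitrary bump $b$, splits $\int|\phi_k|$ at radius $R/r$, uses Cauchy--Schwarz on the inner region and the decay $\||x|b\|_{L^1}<\infty$ on the tail, and then has to run a two-parameter limit (first $k\to\infty$ for fixed $R$, then $R\to\infty$) to reach the contradiction, with no explicit growth rate in $k$. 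You instead build the bump as an autocorrelation $\eta=\chi*\widetilde{\chi}$, so that $\mathscr{F}^{-1}(\eta)\ge 0$ and its Fourier transform $\eta$ is compactly supported; combined with shrinking $r$ so that the rescaled frequencies $y_i/r_k$ are more than $1$-separated (legitimate, since the hypothesis forces the $y_i$ to be distinct --- if $y_i=y_j$ the triple intersection would be the nonempty set $(\overline{B}(y_i,r)-\Omega)\cap\Omega$), the weighted Cauchy--Schwarz argument kills all off-diagonal terms exactly and yields the clean bound $\|\psi_k\|_{L^1}\le\eta(0)\sqrt{k}$, hence an explicit rate $\|[\psi_k]\|/\|\Ha_{\psi_k}\|\gtrsim\sqrt{k}$ in a single limit. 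Your version buys a sharper, more transparent estimate at the cost of a slightly more special choice of bump; the paper's version works for any Schwartz bump but obscures the rate. All the supporting computations in your argument (the identity $\int\mathscr{F}^{-1}(\eta)=\eta(0)$, the vanishing of $\eta((y_i-y_j)/r_k)$ for $i\ne j$, and the scale-invariance of the ratio in $r_k$) check out.
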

\begin{proof}
	First, note that, if such $r>0$ exists, we can choose $r$ small enough such that also $\overline{B}(y_{i},r)\subset 2\Omega$.
	Let $\widehat{b}$ be a smooth bump function such that $\supp\widehat{b}=\overline{B}(0,1)$, $\widehat{b}=1$ on $\overline{B}(0,\frac{1}{2})$ and $\widehat{b}(x)\in [0,1]$ for all $x\in\mathbb{R}^{n}$. Let $\widehat{b}_{j}^{r}(x)=\widehat{b}(\frac{x-y_{j}}{r})$, $j\in\{1,...,k\}$. For a fixed $k\in\mathbb{N}$, let $r>0$ be such that our assumption holds and let $\phi_{k}=\sum_{j=1}^{k}b_{j}^{r}$.
	We will prove that $$\dfrac{\|\widehat{\phi_{k}}\|_{L^{2}}^{2}}{\|\Ha_{\phi_{k}}\|\|\phi_{k}\|_{L^{1}}}\to \infty$$ and thus by Lemmas \ref{lemma 1} and \ref{lemma 3} the Nehari theorem fails for $\Omega$. Since $\overline{B}(y_{j},r)\subset 2\Omega$, we get that $\frac{1}{2}\overline{B}(y_{j},r)\subset \Omega$ and $\frac{1}{2}\overline{B}(y_{j},r)\subset \overline{B}(y_{j},r)-\frac{1}{2}\overline{B}(y_{j},r)\subset \overline{B}(y_{j},r)-\Omega$ and thus $$\supp \widehat{b}_{j}^{r}=\overline{B}(y_{j},r)\subset 2(\Omega\cap(\overline{B}(y_{j},r)-\Omega)).$$ Therefore by our assumption $b_{j}^{r}$ are orthogonal, hence we estimate 
	$$\|\widehat{\phi_{k}}\|_{L^{2}}^{2} =\sum_{j=1}^{k}\|\widehat{b}_{j}^{r}\|_{L^{2}}^{2}=r^{n}k\|\widehat{b}\|_{L^{2}}^{2}.$$
	Since now $D_{b_{j}^{r}}=(\overline{B}(y_{j},r)-\Omega)\cap\Omega$, for the norms of the Hankel operators, using Lemma \ref{lemma 2} we get 
	$$\|\Ha_{\phi_{k}}\|=\max_{1\leq j\leq k}\|\Ha_{b_{j}^{r}}\|\leq \max_{1\leq j\leq k}\|b_{j}^{r}\|_{L^{\infty}}\leq \max_{1\leq j\leq k}\|\widehat{b}_{j}^{r}\|_{L^{1}}= r^{n}\|\widehat{b}\|_{L^{1}}.$$
	Now, we need to bound $\|\phi_{k}\|_{L^{1}}$ from above. For $R>0$, we decompose this norm into:
	$$\int_{\mathbb{R}^{n}}|\phi_{k}|=\int_{|x|\leq \frac{R}{r}}|\phi_{k}|+\int_{|x|>\frac{R}{r}}|\phi_{k}|:=I_{1}+I_{2}.$$
	For $I_{1}$ we use the Cauchy-Schwarz inequality to get
	$$I_{1}\leq \sqrt{m_{n}(\frac{R}{r}B_{n})}\|\phi_{k}\|_{L^{2}}=c_{n}\left(\dfrac{R}{r}\right)^{\frac{n}{2}}\sqrt{\sum_{j=1}^{k}\|b_{j}^{r}\|^{2}_{L^{2}}}= c_{n}\left(\dfrac{R}{r}\right)^{\frac{n}{2}}\sqrt{kr^{n}\|b\|_{L^{2}}^{2}}= c_{n}R^{\frac{n}{2}}\sqrt{k\|b\|_{L^{2}}^{2}},$$
	where $m_{n}$ is the Lebesgue measure on $\mathbb{R}^{n}$, $B_{n}$ is the unit ball in $\mathbb{R}^{n}$ and $c_{n}$ is the square root of the volume of $B_{n}$.

	For $I_{2}$ we observe that the definition $\widehat{b}_{j}^{r}(x)=\widehat{b}(\frac{x-y_{j}}{r})$ implies that $b_{j}^{r}(x)=r^{n}e^{\frac{2\pi i}{r}xy_{j}}b(rx)$ and thus we can calculate
	\begin{eqnarray}
	I_{2}&=&\int_{|x|>\frac{R}{r}}|\phi_{k}|\leq\sum_{j=1}^{k}\int_{|x|>\frac{R}{r}}|b_{j}^{r}|=\sum_{j=1}^{k}\int_{|x|>\frac{R}{r}}|r^{n}b(rx)|=\sum_{i=1}^{k}\int_{|x|>R}|b|=k\int_{|x|>R}\dfrac{|x||b|}{|x|} \nonumber \\
	&\leq& \dfrac{k}{R}\||x|b\|_{L^{1}},\nonumber
	\end{eqnarray}
where $\||x|b\|_{L^{1}}$ is finite since $b$ is a Schwartz function.

	Finally, if we assume the Nehari theorem on $\PW(\Omega)$, by using Lemmas \ref{lemma 1} and \ref{lemma 3} we get that for every $k\in\mathbb{N}$ and $R>0$: 
	\begin{eqnarray}
	C&\geq& \dfrac{\inf\{\|\psi\|_{L^{\infty}}:\widehat{\psi}|_{2\Omega}=\widehat{\phi_{k}}|_{2\Omega}\}}{\|\Ha_{\phi_{k}\|}}\geq \dfrac{\|\widehat{\phi_{k}}\|_{L^{2}}^{2}}{\|\phi_{k}\|_{L^{1}}\|\Ha_{\phi_{k}}\|}\geq \dfrac{r^{n}k\|\widehat{b}\|_{L^{2}}^{2}}{r^{n}\|\widehat{b}\|_{L^{1}}(c_{n}R^{\frac{n}{2}}\sqrt{k\|b\|_{L^{2}}^{2}}+\dfrac{k}{R}\||x|b\|_{L^{1}})}\nonumber \\ &=&\dfrac{k\|\widehat{b}\|_{L^{2}}^{2}}{\|\widehat{b}\|_{L^{1}}(c_{n}R^{\frac{n}{2}}\sqrt{k\|b\|_{L^{2}}^{2}}+\dfrac{k}{R}\||x|b\|_{L^{1}})}.\nonumber
\end{eqnarray}
 Now, taking $k\to\infty$ we find that
 $$C\geq \dfrac{R\|\widehat{b}\|_{L^{2}}^{2}}{\|\widehat{b}\|_{L^{1}}\||x|b\|_{L^{1}}}$$
 for every $R>0$, thus the Nehari theorem fails.
\end{proof}

\section{The main theorem and its proof}\label{sec 3}
Let $K\subset \mathbb{R}^{n}$ be closed with nonempty interior. For $x\in\partial K$, a supporting hyperplane of $K$ through $x$ is a hyperplane $H=\{y\in\mathbb{R}^{n}:\langle y,a\rangle= t\}$, $a\in\mathbb{R}^{n},$ $t\in\mathbb{R}$, such that 
$$K\subset H^{-}=\{y\in\mathbb{R}^{n}:\langle y,a\rangle\leq t\}\quad \text{and}\quad x\in H.$$ $K$ is convex if and only if every boundary point admits a supporting hyperplane  \cite[Theorem 1.16]{MR4180684}. There might exist more than one supporting hyperplane through some $x\in\partial K$, for example let us consider consider $K=[-1,1]^{2}$ in $\mathbb{R}^{2}$. Then the lines $x=1$ and $y=1$ are both supporting hyperplanes of $K$ through $(1,1)$. If $K\cap H^{-}=\{x\}$ for some supporting hyperplane, then we call $x$ an exposed point of $K$. For two points $x_{1},x_{2}\in \mathbb{R}^{n}$ we use the notation $$[x_{1},x_{2}]=\{tx_{1}+(1-t)x_{2}:t\in [0,1]\}\quad\text{and}\quad (x_{1},x_{2})=\{tx_{1}+(1-t)x_{2}:t\in (0,1)\}$$ We say that a point $x\in\partial K$ is an extreme point of $K$, if for every $y,z\in K$, $x\in [y,z]$ implies that $x=y$ or $x=z$. It is easy to see that an exposed point is an extreme point. The converse is not true. For example we can consider the set $$[-2,0]\times [-1,1]\cup\{(x,y)\in\mathbb{R}^{2}:x^{2}+y^{2}\leq 1\text{ and } x\geq 0 \}.$$ The point $(0,1)$ is an extreme point but the only supporting hyperplane through $(0,1)$ is $y=1$ which intersects our set at $[-2,0]\times\{1\}$. However, there is a relation between extreme and exposed points, namely a theorem of Straszewicz \cite{MR0274683} that states the following:
\begin{lm}(Straszewicz's theorem) \label{Straszewicz's theorem}
	Let $K$ be a convex closed set. Then the set of exposed points are dense in the set of extreme points.
\end{lm}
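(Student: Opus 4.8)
The plan is to prove the nontrivial inclusion $\ext K\subseteq\overline{\exp K}$, i.e. that every extreme point of $K$ is a limit of exposed points (the inclusion $\exp K\subseteq\ext K$ being immediate from the definitions). I would first reduce to the case the paper actually uses, namely $K$ compact with nonempty interior: if $K$ contains a line then $\ext K=\emptyset$ and there is nothing to prove, and otherwise one intersects $K$ with large closed balls around the given extreme point and passes to a limit (or appeals to the general statement in \cite{MR0274683}). The single geometric mechanism behind the whole argument is the following: \emph{if a point $p\in\mathbb R^{n}$ admits a unique farthest point $q$ in $K$, then $q$ is exposed.} Indeed, setting $r=\max_{y\in K}|p-y|=|p-q|$, one has $K\subseteq\overline B(p,r)$ with $q\in\partial\overline B(p,r)$, so the hyperplane $H$ through $q$ orthogonal to $p-q$ is tangent to the sphere at $q$; then $K$ lies in the closed half-space bounded by $H$ that contains $p$, and $K\cap H\subseteq\overline B(p,r)\cap H=\{q\}$, so $H$ exposes $q$.

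Given $x\in\ext K$ and $\epsilon>0$, the next step is to locate a direction in which $x$ strictly beats the far part of $K$. Because $x$ is extreme, $x\notin\conv\bigl(K\setminus B(x,\epsilon)\bigr)$ — otherwise, writing $x$ as a convex combination of points of $K$ at distance at least $\epsilon$ from $x$ and inducting on the number of terms using the extremality condition gives a contradiction — and this convex hull is compact (Carathéodory). Hence $x$ can be strictly separated from it: there is a unit vector $a$ with $\delta:=\langle a,x\rangle-\sup\{\langle a,y\rangle:y\in K,\ |y-x|\ge\epsilon\}>0$. Put $p_{t}=x-ta$. The identity $|p_{t}-x|^{2}-|p_{t}-y|^{2}=2t\langle a,x-y\rangle-|x-y|^{2}$ shows that as soon as $t>(\operatorname{diam}K)^{2}/(2\delta)$ one has $|p_{t}-x|>|p_{t}-y|$ for every $y\in K$ with $|y-x|\ge\epsilon$; hence every farthest point of $K$ from $p_{t}$ lies in $B(x,\epsilon)$, and by continuity of the two maxima of $|\,\cdot\,-p|$ over the compact pieces $K\cap\overline B(x,\epsilon)$ and $K\setminus B(x,\epsilon)$, the same holds for all $p$ in a neighborhood of $p_{t}$.

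It remains to upgrade ``a farthest point in $B(x,\epsilon)$'' to ``the farthest point in $B(x,\epsilon)$''. For this I would use that $g(p):=\max_{y\in K}|p-y|$ is a convex function on $\mathbb R^{n}$, hence differentiable on a dense (even full-measure) set, together with the fact that differentiability of $g$ at $p$ forces the farthest point from $p$ to be unique: two distinct farthest points $y_{0}\neq y_{1}$ at common distance $r$ produce distinct unit vectors $n_{i}=(p-y_{i})/r$, and the one-sided-derivative formula $g'(p;u)=\max_{i}\langle u,n_{i}\rangle$ gives $g'(p;u)+g'(p;-u)=|n_{0}-n_{1}|>0$ when $u$ is parallel to $n_{0}-n_{1}$, which is incompatible with differentiability. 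So, choosing $p$ near $p_{t}$ at which $g$ is differentiable, its unique farthest point $q$ lies in $B(x,\epsilon)$ and is exposed by the mechanism of the first paragraph; letting $\epsilon\to0$ gives $x\in\overline{\exp K}$.

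I expect the main obstacle to be exactly this last (uniqueness) step — namely proving cleanly that differentiability of the farthest-distance function, or equivalently negligibility of the set of $p$ with several farthest points, forces a single farthest point — since that is where genuine convex-analytic input is needed; a secondary nuisance is making the reduction from an arbitrary closed convex set to the compact case fully rigorous. By contrast, the separation argument and the tangent-hyperplane computation are routine once the exterior point $p$ has been chosen in the correct direction.
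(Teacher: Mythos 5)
The paper never actually proves this lemma: it is Straszewicz's theorem, quoted with a citation to \cite{MR0274683} and used as a black box, so there is no in-paper argument to compare yours against. Your proposal is a correct rendering of the classical farthest-point proof, and the individual steps check out: extremality does give $x\notin\conv\bigl(K\setminus B(x,\epsilon)\bigr)$ by your induction, strong separation together with the identity $|p_{t}-x|^{2}-|p_{t}-y|^{2}=2t\langle a,x-y\rangle-|x-y|^{2}$ does force every farthest point from $p_{t}$ into $\overline B(x,\epsilon)$ once $t>(\operatorname{diam}K)^{2}/(2\delta)$, and the reduction of the unbounded line-free case to the compact case works without any limit passage: if $q$ is an exposed point of $K_{R}=K\cap\overline B(x,R)$ lying in the \emph{open} ball, with exposing hyperplane $H$, then a point of $K\setminus H^{-}$ or a second point of $K\cap H$ would, via the segment joining it to $q$, produce points of $K_{R}$ arbitrarily close to $q$ that violate either $K_{R}\subset H^{-}$ or $K_{R}\cap H=\{q\}$; so $q$ is already exposed in $K$. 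The one substantive remark is that your entire third paragraph --- the step you single out as the main obstacle --- is unnecessary: you do not need the farthest point to be unique. If $q$ is \emph{any} farthest point of $K$ from $p$ and $r=|p-q|$, then $K\subseteq\overline B(p,r)$, and the hyperplane $H$ through $q$ orthogonal to $p-q$ satisfies $\overline B(p,r)\cap H=\{q\}$ by strict convexity of the ball, whence $K\cap H=\{q\}$ and $q$ is exposed. So the convexity and a.e.\ differentiability of $g$, Danskin's directional-derivative formula, and the perturbation of $p_{t}$ to a differentiability point can all be deleted, and the proof closes immediately after your second paragraph.
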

 For a non-empty convex set $K\subset \mathbb{R}^{n}$ we will denote by $\ext K$ and $\exp K$ the extreme and the exposed points of $\overline{K}$ respectively. 
	\begin{theo} \label{theo 1}
		Let $\Omega\subset\mathbb{R}^{n}$ be non-empty, open and convex, and suppose that $\exp\Omega$ is an infinite set. Then the Nehari theorem fails for $\Omega$.
		
	\end{theo}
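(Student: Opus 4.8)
The plan is to reduce everything to Proposition~\ref{prop 1}. Fix $k\in\mathbb{N}$; since $\exp\Omega$ is infinite we may choose $k$ distinct exposed points $w_{1},\dots,w_{k}$ of $\overline{\Omega}$. I will produce a single radius $r>0$ and points $y_{1},\dots,y_{k}\in 2\Omega$, with $y_{i}$ close to $2w_{i}$, such that the sets $D_{i}:=(\overline{B}(y_{i},r)-\Omega)\cap\Omega$ are pairwise disjoint; Proposition~\ref{prop 1} then immediately yields the failure of the Nehari theorem for $\Omega$. The geometric heart of the matter is that an exposed point $w$ forces $(\{2w\}-\overline{\Omega})\cap\overline{\Omega}=\{w\}$: if $x$ and $2w-x$ both lie in $\overline{\Omega}$ then $w$ is their midpoint, so $x=w$ because $w$ is extreme. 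What is really needed is a \emph{uniform} version of this as one perturbs $2w$ to a nearby $y$ and thickens the point to a small ball.

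The key claim to establish is: for $w\in\exp\Omega$ and every $\varepsilon>0$ there are $\delta,r>0$ with $(\overline{B}(y,r)-\overline{\Omega})\cap\overline{\Omega}\subset\overline{B}(w,\varepsilon)$ whenever $|y-2w|<\delta$. I would prove this by contradiction. Let $a\in\mathbb{R}^{n}\setminus\{0\}$ and $t\in\mathbb{R}$ realize the exposing hyperplane, so $\langle z,a\rangle\le t$ for all $z\in\overline{\Omega}$, $\langle w,a\rangle=t$, and $t$ is attained on $\overline{\Omega}$ only at $w$. A failure of the claim produces $y_{m}\to 2w$, $r_{m}\downarrow 0$ and $x_{m},u_{m}\in\overline{\Omega}$ with $|x_{m}-w|\ge\varepsilon$ and $|x_{m}+u_{m}-y_{m}|\le r_{m}$, whence $x_{m}+u_{m}\to 2w$. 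Pairing with $a$ and using $\langle x_{m},a\rangle,\langle u_{m},a\rangle\le t$ together with $\langle x_{m}+u_{m},a\rangle\to 2t$ forces $\langle x_{m},a\rangle\to t$. It then remains to show that a sequence in $\overline{\Omega}$ along which $\langle\cdot,a\rangle\to t$ must converge to $w$: if the sequence is bounded this follows from closedness of $\overline\Omega$ and the uniqueness of the maximizer; if it were unbounded, then after passing to a subsequence $x_{m}/|x_{m}|\to v$ with $|v|=1$, and such $v$ is a recession vector of the closed convex set $\overline{\Omega}$ with $\langle v,a\rangle=0$ (the latter because $|x_{m}|\to\infty$ while $\langle x_{m},a\rangle$ stays bounded), so $w+sv\in\overline{\Omega}$ and $\langle w+sv,a\rangle=t$ for every $s\ge 0$, contradicting that $w$ is the only such point. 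Since $|x_{m}-w|\ge\varepsilon$, this is the desired contradiction, and the claim follows.

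Granting the claim, I assemble the construction as follows. Put $\varepsilon=\tfrac13\min_{i\ne j}|w_{i}-w_{j}|>0$; for each $i$ the claim gives $\delta_{i},r_{i}>0$, and I set $r=\min_{i}r_{i}$, observing that shrinking the radius only shrinks $\overline{B}(y,r)-\Omega$, so that $(\overline{B}(y,r)-\Omega)\cap\Omega\subset(\overline{B}(y,r)-\overline{\Omega})\cap\overline{\Omega}\subset\overline{B}(w_{i},\varepsilon)$ for every $y$ with $|y-2w_{i}|<\delta_{i}$. Because $w_{i}\in\overline{\Omega}$, I can pick $y_{i}\in 2\Omega$ with $|y_{i}-2w_{i}|<\delta_{i}$, simply by approximating $w_{i}$ by points of $\Omega$ and doubling. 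For $i\ne j$ the balls $\overline{B}(w_{i},\varepsilon)$ and $\overline{B}(w_{j},\varepsilon)$ are disjoint by the choice of $\varepsilon$, hence $D_{i}\cap D_{j}=\emptyset$, which is exactly the hypothesis of Proposition~\ref{prop 1}. The step I expect to be the main obstacle is precisely the uniform claim when $\Omega$ is unbounded: one must rule out the relevant points escaping to infinity while remaining at the correct hyperplane level, and this is exactly where exposedness of $w$ (as opposed to mere extremality) is used.
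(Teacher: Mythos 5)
Your argument is correct and takes essentially the same route as the paper: both reduce to Proposition~\ref{prop 1} by proving a localization claim at an exposed point $w$ (that $(\overline{B}(y,r)-\Omega)\cap\Omega$ is trapped in a small ball around $w$ once $y$ is close to $2w$ and $r$ is small), established by contradiction through pairing the offending points with the exposing functional. The only difference is a technical one in recovering compactness: the paper projects the bad sequence onto the sphere $\partial B(w,\rho)$ along segments to $w$ (which stay in $\overline{\Omega}$ by convexity), while you split into bounded and unbounded cases and dispose of the latter with a recession-cone argument; both correctly rule out escape to infinity.
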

		\begin{proof} We want to prove the following claim:
			\begin{Claim}
				For every $y\in\exp\Omega$ and every $\rho>0$ there exists $z\in\Omega$ and $s>0$ such that 
				$$(\overline{B}(2z,s)-\Omega)\cap\Omega\subset B(y,\rho).$$  
			\end{Claim}
		First let us suppose the claim is true. Then, for $k\in\mathbb{N}$ we can choose distinct $y_{1},...,y_{k}\in\exp\Omega$ and $\rho=\frac{1}{2}\min_{1\leq i\neq j\leq k}|y_{i}-y_{j}|$. Then by our claim we can find $z_{i}\in \Omega$ and $s>0$ such that $$(\overline{B}(2z_{i},s)-\Omega)\cap (\overline{B}(2z_{j},s)-\Omega)\cap \Omega\subset B(y_{i},\rho)\cap B(y_{j},\rho)=\emptyset,\quad i\neq j.$$ Finally, Proposition \ref{prop 1} yields the result.
		 \\\textbf{Proof of Claim:}
			Let us fix $y\in\exp\Omega$ and assume that our claim is false. Let us also fix $x_{k}\in\Omega$, $x_{k}\to y$. Then there exists a $\rho>0$ such that for all $k\in\mathbb{N}$,
			$$(\overline{B}(2x_{k},\dfrac{1}{k})-\Omega)\cap\Omega\nsubseteq B(y,\rho).$$ Thus we can find a sequence $z_{k}\in\Omega$ such that
			$|z_{k}-y|\geq\rho$ and $z_{k}=2x_{k}+\dfrac{1}{k}u_{k}-\omega_{k}$, where $\omega_{k}\in \Omega$ and $|u_{k}|\leq 1.$	
			Let $H=\{x\in\mathbb{R}^{n}:\langle x,a \rangle= t\}$ be a supporting hyperplane at $y$ that intersects $\overline{\Omega}$ only at $y$. Then
			\begin{eqnarray} 
			t&>&\langle z_{k},a\rangle =\langle 2x_{k}+\dfrac{1}{k}u_{k}-\omega_{k},a\rangle = 2\langle x_{k},a\rangle+\dfrac{1}{k}\langle u_{k},a\rangle-\langle \omega_{k},a\rangle\geq 2\langle x_{k},a\rangle+\dfrac{1}{k}\langle u_{k},a\rangle-t.\nonumber 
			\end{eqnarray}
			Taking $k\to \infty$ we get that $\lim_{k}\langle z_{k},a\rangle =t$. Since $z_{k}\notin B(y,\rho)$, we can find $a_{k}\in [z_{k},y]\subset \overline{\Omega}$ such that $a_{k}\in \partial B(y,\rho)$. 
			Now let $a_{k_{i}}$ be a subsequence that converges to some $a_{0}\in \partial B(y,\rho)$. Since $\overline{\Omega}$ is closed, $a_{0}\in \overline{\Omega}$. It must also be the case that $\langle a_{0},a\rangle=t$, since:	
		    There exists $t_{k_{i}}\in[0,1]$ such that $a_{k_{i}}=t_{k_{i}}z_{k_{i}}+(1-t_{k_{i}})y$, and if we assume $k_{j}$ to be a subsequence of $k_{i}$ such that $t_{k_{j}}$ converges, we get that 
			$$\langle a_{0},a\rangle =\lim_{j} \langle a_{k_{j}},a\rangle= \lim_{j}(t_{k_{j}}\langle z_{k_{j}},a \rangle +(1-t_{k_{j}}) \langle y,a \rangle )=t.$$
		    Therefore $a_{0}\in H\cap \overline{\Omega}$ which implies that $a_{0}=y$, contradicting the fact that $a_{0}\in\partial B(y,\rho)$.
		\end{proof}
	A polytope is defined to be the convex hull of finitely many points, i.e. the smallest convex set that contains these points, and a polyhedron to be the intersection of finitely many half-spaces. These two definition coinside for bounded convex sets \cite[Theorems 1.20 and 1.22]{MR4180684}. For a polytope $K$ generated by a finite set $E$ it is true that $\exp K=\ext K\subset E$ \cite[Theorem 1.18]{MR4180684}. A closed bounded convex set in $\mathbb{R}^{n}$ is the convex hull of its extreme points \cite[Theorem 1.21]{MR4180684}, therefore polytopes are the only bounded convex sets with finitely many extreme points, and consequently finitely many exposed points (Lemma  \ref{Straszewicz's theorem}). Thus we get the following corollary:
\begin{cor}\label{cor 1}
	The Nehari theorem fails for any convex set with an infinite set of extreme points. In particular the Nehari theorem fails for any bounded convex set that is not a polytope.
\end{cor}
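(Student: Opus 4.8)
The plan is to deduce Corollary~\ref{cor 1} from Theorem~\ref{theo 1} together with the structure theory of convex sets summarized just above the corollary statement. The first observation is that Theorem~\ref{theo 1} gives the conclusion whenever $\exp\Omega$ is infinite, so it suffices to show that $\ext\Omega$ infinite forces $\exp\Omega$ infinite, and that a bounded convex set which is not a polytope has infinitely many extreme points. For the first implication, I would argue by contraposition: if $\exp\Omega$ were finite, then since by Straszewicz's theorem (Lemma~\ref{Straszewicz's theorem}) $\exp\Omega$ is dense in $\ext\Omega$, and a finite set is closed, we would get $\ext\Omega\subseteq\overline{\exp\Omega}=\exp\Omega$, hence $\ext\Omega$ finite. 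Thus $\ext\Omega$ infinite $\Rightarrow$ $\exp\Omega$ infinite, and Theorem~\ref{theo 1} applies.

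For the ``in particular'' clause I would use the three cited facts about bounded convex sets: a bounded convex set is a polytope iff it is a polyhedron \cite[Theorems 1.20 and 1.22]{MR4180684}; a polytope generated by a finite set $E$ satisfies $\ext K\subseteq E$, so it has finitely many extreme points \cite[Theorem 1.18]{MR4180684}; and conversely a closed bounded convex set equals the convex hull of its extreme points \cite[Theorem 1.21]{MR4180684}. Combining the last two: if $\overline\Omega$ is bounded and has only finitely many extreme points, then $\overline\Omega=\conv(\ext\overline\Omega)$ is the convex hull of a finite set, i.e. a polytope. Taking the contrapositive, a bounded convex set that is not a polytope has infinitely many extreme points, and then the first part of the corollary gives the failure of the Nehari theorem. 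One should be slightly careful that ``$\Omega$ is a polytope'' is really a statement about $\overline\Omega$ since $\Omega$ is open, but the extreme and exposed point sets were defined in terms of $\overline\Omega$, so this bookkeeping is harmless.

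The only genuine content is really Theorem~\ref{theo 1} itself, which is assumed; the corollary is a packaging step, so I do not expect a serious obstacle. The one point that requires a word of care is the direction of Straszewicz's theorem: it asserts density of exposed points in extreme points, not the reverse inclusion, so the contraposition argument above (finite $\exp\Omega$ closed $\Rightarrow$ contains its own closure $\supseteq\ext\Omega$) is the correct way to use it; trying to argue directly that ``$\ext\Omega$ infinite $\Rightarrow$ $\exp\Omega$ infinite'' without invoking closedness of a finite set would be the easy place to slip. Everything else is a direct citation chain.

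\begin{proof}
Suppose first that $\ext\Omega$ is infinite. If $\exp\Omega$ were finite, then it would be a closed set, and since $\exp\Omega$ is dense in $\ext\Omega$ by Straszewicz's theorem (Lemma~\ref{Straszewicz's theorem}), we would have $\ext\Omega\subseteq\overline{\exp\Omega}=\exp\Omega$, contradicting that $\ext\Omega$ is infinite. Hence $\exp\Omega$ is infinite, and Theorem~\ref{theo 1} shows that the Nehari theorem fails for $\Omega$.

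Now suppose $\Omega$ is bounded and the Nehari theorem does not fail for it, so that by the previous paragraph $\ext\Omega$ is finite. Since $\overline{\Omega}$ is closed, bounded and convex, it equals the convex hull of its extreme points \cite[Theorem 1.21]{MR4180684}, that is, $\overline{\Omega}=\conv(\ext\Omega)$, which is the convex hull of a finite set and hence a polytope. Equivalently, every bounded convex set that is not a polytope has infinitely many extreme points, and therefore the Nehari theorem fails for it.
\end{proof}
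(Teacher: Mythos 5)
Your proof is correct and follows essentially the same route as the paper: Straszewicz's theorem (via the observation that a finite set is closed) upgrades infinitely many extreme points to infinitely many exposed points so that Theorem~\ref{theo 1} applies, and the Minkowski/Krein--Milman-type fact that a compact convex set is the convex hull of its extreme points handles the bounded non-polytope case. The paper merely places the second half of this reasoning in the paragraph preceding the corollary rather than inside the proof environment.
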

\begin{proof}
	Let $\Omega$ be such a set. Since $\overline{\Omega}$ is closed and has an infinite set of extreme points, the set of exposed points must also be infinite by Lemma \ref{Straszewicz's theorem}.
	\end{proof}

In the case of unbounded convex sets, every closed convex set $K$ that does not contain lines is the convex hull of its extreme points and its extreme half-lines \cite[Theorem 2]{MR2463172}(i.e. a half-line $l\subset \partial K$ with the property that if two points $x,y$ in our convex set satisfy $(x,y)\cap l\neq\emptyset$, then both $x$ and $y$ must belong to $l$). Using this we prove the following corollary:
\begin{cor}\label{cor 2}
	The Nehari theorem fails for every convex subset of $\mathbb{R}^{2}$ that is not a polyhedron.
\end{cor}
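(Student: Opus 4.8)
The plan is to derive this from Theorem~\ref{theo 1}, so it suffices to prove that if $\Omega\subset\mathbb{R}^{2}$ is open, convex and not a polyhedron, then $\exp\Omega$ is infinite. I would argue by contraposition: assume $\exp\Omega$ is finite and show that $\overline{\Omega}$ must then be a polyhedron. By Straszewicz's theorem (Lemma~\ref{Straszewicz's theorem}) the extreme points lie in the closure of the exposed points, so $\ext\Omega\subset\exp\Omega$ is finite as well; write $\ext\Omega=\{p_{1},\dots,p_{m}\}$.

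Next I would split into two cases according to whether $\overline{\Omega}$ contains a line. If it does, then $\overline{\Omega}$ is invariant under translation along that direction (a line inside a closed convex set yields two opposite recession directions), so in suitable coordinates $\overline{\Omega}=I\times\mathbb{R}$ for some interval $I\subset\mathbb{R}$, which is nondegenerate because $\Omega$ is open; then $\overline{\Omega}$ is the whole plane, a half-plane, or a strip, each of which is a polyhedron. If $\overline{\Omega}$ contains no line, I would invoke the decomposition of \cite[Theorem~2]{MR2463172} quoted above: $\overline{\Omega}=\conv\big(\{p_{1},\dots,p_{m}\}\cup l_{1}\cup\dots\cup l_{N}\big)$, where $l_{1},\dots,l_{N}$ are the extreme half-lines of $\overline{\Omega}$. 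The crucial point is that $N$ is finite, in fact $N\le 2$: the recession cone $C$ of $\overline{\Omega}$ is a pointed closed convex cone in $\mathbb{R}^{2}$ and hence has at most two extreme rays; the direction of any extreme half-line must be one of these extreme rays (a supporting line of $\overline{\Omega}$ carrying the half-line has an outward normal $\nu$ with $C\subset\{\langle\cdot,\nu\rangle\le 0\}$, while the half-line's direction lies on $\{\langle\cdot,\nu\rangle=0\}$, forcing it onto $\partial C$), and two distinct parallel extreme half-lines would confine $\overline{\Omega}$ to a strip, contradicting the no-line assumption.

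Finally, writing $l_{k}=\{b_{k}+td_{k}:t\ge 0\}$, I would check that $\overline{\Omega}=\conv\{p_{1},\dots,p_{m},b_{1},\dots,b_{N}\}+\cone\{d_{1},\dots,d_{N}\}$: the inclusion $\subset$ is immediate from the description of $\overline{\Omega}$ above, and $\supset$ holds because $\conv\{p_{i},b_{k}\}\subset\overline{\Omega}$ and each $d_{k}$ is a recession direction, so the Minkowski sum stays inside $\overline{\Omega}$. Thus $\overline{\Omega}$ is a Minkowski sum of a polytope and a polyhedral cone, hence a polyhedron by the Minkowski--Weyl theorem, contradicting the hypothesis. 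This completes the contraposition, and Theorem~\ref{theo 1} then yields the failure of the Nehari theorem. I expect the main difficulty to lie precisely in controlling the unbounded features in the no-line case---showing that there are only finitely many extreme half-lines and that, together with the finitely many extreme points, they already generate $\overline{\Omega}$ as a polyhedron; the bounded subcase is by comparison essentially Corollary~\ref{cor 1}, since a bounded convex set is a polyhedron exactly when it is a polytope.
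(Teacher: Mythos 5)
Your overall route coincides with the paper's: reduce, via Theorem \ref{theo 1} and Straszewicz's theorem, to showing that a closed convex subset of $\mathbb{R}^{2}$ with finitely many extreme points is a polyhedron; split according to whether the set contains a line; and in the line-free case combine the decomposition of \cite{MR2463172} with finiteness of the set of extreme half-lines, a Minkowski-sum representation, and \cite[Theorem 1.2]{MR1311028}. The one genuinely different ingredient is how you obtain finiteness of the extreme half-lines: the paper proves this as a separate statement (Lemma \ref{lm 5}) by a pigeonhole and intermediate-value argument bounding their number by $8$, whereas you pass through the recession cone $C$, which is pointed and hence has at most two extreme rays, and correctly observe that the direction of every extreme half-line spans an extreme ray of $C$ (the full line through a boundary half-line is a supporting line, and the corresponding face of $C$ is an extreme ray). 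That part is sound and arguably cleaner than Lemma \ref{lm 5}.

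There is, however, one false sub-step: ``two distinct parallel extreme half-lines would confine $\overline{\Omega}$ to a strip, contradicting the no-line assumption.'' Containment in a strip does not force a convex set to contain a line: $K=[0,\infty)\times[0,1]$ contains no line and has the two parallel extreme half-lines $[0,\infty)\times\{0\}$ and $[0,\infty)\times\{1\}$, so ``at most one extreme half-line per direction'' is simply wrong. The gap is easily repaired without changing your strategy: for a fixed direction $d$, every extreme half-line with direction $d$ lies on a supporting line parallel to $d$; there are at most two such supporting lines (one for each normal $\pm\nu$ with $\langle d,\nu\rangle=0$ for which the corresponding supremum over $K$ is finite); and each such line carries at most one extreme half-line (a proper sub-half-line of a boundary half-line is not extreme, and $d$ and $-d$ cannot both be recession directions since $C$ is pointed). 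This yields $N\le 4$, and finiteness is all the Minkowski--Weyl step requires. With that correction your argument is complete and delivers the same conclusion as the paper.
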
 
In order to prove this corollary, we first need to prove the following lemma:
\begin{lm} \label{lm 5}
	Let $K\subset \mathbb{R}^{2}$ be convex and closed. Then $K$ has finitely many extreme half-lines. 
\end{lm}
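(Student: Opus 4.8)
The plan is to show that every extreme half-line of $K$ is an unbounded edge $H\cap K$ for a supporting line $H$ of $K$, and then to bound the number of unbounded edges. First I would record that a half-line $\ell=\{a+tv:t\ge 0\}\subset\partial K$ lies on a supporting line of $K$: taking a supporting line $H=\{x:\langle x,n\rangle=c\}$ (so that $K\subset H^{-}=\{x:\langle x,n\rangle\le c\}$) at the point $a+v$, the points $a$ and $a+sv$ with $s>1$ lie in $K\subset H^{-}$ while $a+v$ is a convex combination of them with both coefficients in $(0,1)$ lying on $\partial H^{-}=H$; hence $a$ and all the points $a+sv$, $s>1$, lie on $H$, so $\ell\subset H$. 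If moreover $\ell$ is an extreme half-line, then $H\cap K$ — a convex subset of the line $H$ that contains the half-line $\ell$ — is either $\ell$, a strictly larger half-line, or the whole line $H$; in the last two cases extremality of $\ell$ fails, since one may join a point of $\ell$ to a point of $(H\cap K)\setminus\ell\subset K$ lying ``behind'' the endpoint of $\ell$. Thus $\ell=H\cap K$, and in particular distinct extreme half-lines lie on distinct supporting lines.

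Next I would dispose of the case that $K$ contains a line $L$. Then $K$ is invariant under translations along the direction of $L$, hence $K$ is a strip, a half-plane, a line, or all of $\mathbb{R}^{2}$; in each case $\partial K$ is a union of at most two full lines, each contained in $K$, and no half-line inside such a line is extreme (extend it backwards inside $K$). So in this case $K$ has no extreme half-lines and the lemma is trivial.

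In the remaining case $K$ contains no line, so the recession cone $\mathrm{rec}(K)$ is a pointed closed convex cone in $\mathbb{R}^{2}$, i.e.\ $\{0\}$, a ray, or an angular sector of opening less than $\pi$. If $\ell=H\cap K$ is an extreme half-line with direction $v$, then $K\subset H^{-}$ forces $\mathrm{rec}(K)\subset\{x:\langle x,n\rangle\le 0\}$, while $\ell\subset H$ gives $\langle v,n\rangle=0$; hence $v$ is a recession direction lying on the line bounding a half-plane that contains $\mathrm{rec}(K)$, and for each of the three possible shapes of $\mathrm{rec}(K)$ there are at most two such directions. Finally, three extreme half-lines with a common direction $v$ would lie on three distinct parallel lines; choosing the one strictly between the other two, a point far out on that half-line is an interior point of a segment joining points far out on the other two half-lines, contradicting extremality. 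Hence each direction carries at most two extreme half-lines, so $K$ has at most four of them, which is in particular finite.

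The routine parts are the convex-combination computation in the first step and the three-parallel-lines argument in the last step. The part I expect to need the most care is the bookkeeping around $\mathrm{rec}(K)$: one must make sure every degenerate shape of the recession cone is accounted for (in particular that a line-free $K$ whose recession cone is a ray or $\{0\}$ causes no trouble, and that $K$ containing a line is handled separately as above), and that the direction of an extreme half-line really is forced onto the boundary of $\mathrm{rec}(K)$. Once that is in place, finiteness follows at once.
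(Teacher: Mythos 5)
Your proof is correct, but it takes a genuinely different route from the paper's. The paper argues by pigeonhole on directions: if there were nine or more extreme half-lines, three would have directions in the same quadrant of $\mathbb{R}^{2}$; writing these as graphs $y=a_{j}x+b_{j}$, $x\ge x_{j}$, with $a_{j}\ge 0$, it constructs a single transversal line $y=-x+b$ that all three must cross (intermediate value theorem), and since the three crossing points are collinear the middle one is a nontrivial convex combination of points on the other two half-lines, contradicting extremality. That argument is completely elementary and yields the bound $8$. You instead establish the structural fact that an extreme half-line $\ell$ equals $H\cap K$ for a supporting line $H$, dispose separately of the case where $K$ contains a line (where $K$ is a translate-invariant strip, half-plane, line, or plane and has no extreme half-lines), and then use the recession cone to force the direction of any extreme half-line onto one of the at most two extreme rays of the pointed cone $\mathrm{rec}(K)$; your final step — three parallel extreme half-lines on three distinct parallel lines give a collinearity violating extremality of the middle one — is the same mechanism as the paper's concluding step, just applied to a parallel configuration rather than one cut by a transversal. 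Your version needs slightly more convex-geometry machinery (recession cones, and that a closed convex set containing a line splits off that line), but it buys the sharper bound $4$, identifies exactly which directions can occur, and avoids having to parametrize half-lines as graphs (which in the paper's proof requires a little care for vertical directions). The two points you flag as delicate — that the direction $v$ lies in the exposed face $\mathrm{rec}(K)\cap\{w:\langle w,n\rangle=0\}$, hence on a boundary ray of the cone, and that distinct extreme half-lines with a common direction must lie on distinct parallel lines (which follows from $\ell=H\cap K$ applied to the affine hull of $\ell$) — both go through exactly as you describe.
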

\begin{proof}
 Suppose that $K$ contains 9 or more extreme half-lines. Then by the Pigeonhole principle there exist three of them with directions in the same quadrant, say the first one. Thus $\ell_{j}=\{y=a_{j}x+b_{j}:x\geq x_{j}\}$, for some $a_{j}\geq 0$ and $b_{j},x_{j}\in\mathbb{R}$. Let $\ell=\{y=-x+b\}$, $b\in\mathbb{R}$, be a line such that $(x_{j},a_{j}x_{j}+b_{j})\in\{y\leq-x+b\}$ (for example we can choose $b=\max_{j=1,2,3}(x_{j}+ax_{j}+b_{j})$). Let us set $f_{j}(x)=x+a_{j}x+b_{j}$. Then $f_{j}(x_{j})\leq b$ and $\lim_{x\to +\infty}f_{j}(x)=+\infty$ and thus by the intermediate value theorem, for every $j$ we can find $z_{j}\geq x_{j}$ such that $f_{j}(z_{j})=b$ which implies that $(z_{j},a_{j}z_{j}+b_{j})\in\ell_{j}\cap\ell$. Since these three points are on the same line, we can assume without loss of generality that $(z_{1},a_{1}z_{1}+b_{1})$ is strictly in the line segment that connects $(z_{2},a_{2}z_{2}+b_{2})$ and $(z_{3},a_{3}z_{3}+b_{3})$ which contradicts the fact that $\ell_{1}$ is extremal.
\end{proof}
For the proof of the corollary, we need to introduce the following notation: For a set $X\subset\mathbb{R}^{n}$ we define the positive cone generated by $X$ to be the set $$\cone X=\{\sum_{i=1}^{m}\mu_{i}x_{i}:\mu_{i}\geq 0,x_{i}\in X,m\in\mathbb{N}\}.$$ By \cite[Theorem 1.2]{MR1311028}, we know that a convex set in $\mathbb{R}^{n}$ is a polyhedron if and only if it is the sum of a polytope and a positive cone generated by a finite set. Thus we have the following proof:

\textbf{Proof of Corollary 8:}
 It suffices to prove that if a closed set $K$ has finitely many extreme points then it is a polyhedron. The case where $K$ is bounded has been covered in Corollary \ref{cor 1}. Thus for the rest of the proof, $K$ will be unbounded with finitely many extreme points. First, let us assume that our set $K$ contains a line $\ell$. Let, without loss of generality $\ell=\mathbb{R}\times\{0\}$. If $(x,y)\in K \smallsetminus \ell$, by convexity of $K$ we get that $\mathbb{R}\times[0,y]\subset K$. Therefore, by letting $\alpha=\sup_{(x,y)\in K} y$ and $\beta=\inf_{(x,y)\in K}y$ we get $K=\mathbb{R}\times [\beta,\alpha]$ and thus a polyhedron. Therefore we have to examine the case where $K$ does not contain lines, hence $K$ is the convex hull of its extreme points and its extreme half-lines. By Lemma \ref{lm 5}, $K$ can be written as the convex hull of finitely many points and finitely many half-lines, namely $$K=\conv \{x_{1},...,x_{m},\ell_{1},...,\ell_{k}\},$$
where $m\geq k$ and $x_{i}$ is the origin of $\ell_{i},$ $i=1,...,k.$ Let $u_{i}$ be the direction of $\ell_{i}$. We will prove that
$$K=\conv\{x_{1},...,x_{m},\ell_{1},...,\ell_{k}\}=\conv\{x_{1},...,x_{m}\}+\cone\{u_{1},...,u_{k}\}$$ and thus by \cite[Theorem 1.2]{MR1311028} $K$ is a polyhedron. For the inclusion $``\subset"$, we observe that $x_{i}\in \conv\{x_{1},...,x_{m}\}+\{0\}\subset \conv\{x_{1},...,x_{m}\}+\cone\{u_{1},...,u_{k}\}$ and $\ell_{i}=\{x_{i}+\mu_{i}u_{i}:\mu_{i}\geq 0 \}\subset  \conv\{x_{1},...,x_{m}\}+\cone\{u_{1},...,u_{k}\}$. The inclusion follows from the fact that the positive cone of a set is convex and that the sum of convex sets is also convex. For the other inclusion, let $x\in \conv\{x_{1},...,x_{m}\}+\cone\{u_{1},...,u_{k}\}$. Then there exist $p_{i},\mu_{i}\geq 0$, such that $\sum_{i=1}^{m}p_{i}=1$ and $x=\sum_{i=1}^{m}p_{i}x_{i}+\sum_{i=1}^{k}\mu_{i}u_{i}$. Finally we write $x$ as 
	$$x= \sum_{i=k+1}^{m}p_{i}x_{i}+\sum_{i=1,p_{i}\neq 0}^{k}p_{i}(x_{i}+\dfrac{\mu_{i}}{p_{i}}u_{i})\in \conv\{x_{1},...,x_{m},\ell_{1},...,\ell_{k}\},$$
	as desired.
\qed
\vspace{1mm}
\par  The argument in Corollary \ref{cor 2} cannot be extended to $\mathbb{R}^{n}$, $n\geq 3$. Corollary \ref{cor 2} uses the fact that in $\mathbb{R}^{2}$ there is no convex set with finitely many extreme points that is not a polyhedron. One counterexample in $\mathbb{R}^{3}$ would be the cone $\{(x,y,z)\in\mathbb{R}^{3}:x^{2}+y^{2}=z^{2},z\geq0\}$. This set has only one extreme point (i.e. the origin) and infinitely many extreme half-lines $\{\lambda(\cos\theta,\sin\theta,1):\lambda\geq 0\}$ but it is not a polyhedron. Therefore one could ask the following question:
Is there a convex set in $\mathbb{R}^{n}$, $n\geq 3$ that is not a polyhedron, such that the Nehari theorem holds?

	\bibliographystyle{plain}
	\bibliography{Bibliography}

\begin{thebibliography}{10}

\bibitem{MR3355784}
Fredrik Andersson and Marcus Carlsson.
\newblock On general domain truncated correlation and convolution operators
  with finite rank.
\newblock {\em Integral Equations Operator Theory}, 82(3):339--370, 2015.

\bibitem{MR3626673}
Fredrik Andersson and Marcus Carlsson.
\newblock On the structure of positive semi-definite finite rank general domain
  {H}ankel and {T}oeplitz operators in several variables.
\newblock {\em Complex Anal. Oper. Theory}, 11(4):755--784, 2017.

\bibitem{MR4502777}
Ole~Fredrik Brevig and Karl-Mikael Perfekt.
\newblock The {N}ehari problem for the {P}aley-{W}iener space of a disc.
\newblock {\em J. Geom. Anal.}, 33(1):Paper No. 16, 7, 2023.

\bibitem{MR4227573}
Marcus Carlsson and Karl-Mikael Perfekt.
\newblock Nehari's theorem for convex domain {H}ankel and {T}oeplitz operators
  in several variables.
\newblock {\em Int. Math. Res. Not. IMRN}, (5):3331--3361, 2021.

\bibitem{MR108684}
Philip Hartman.
\newblock On completely continuous {H}ankel matrices.
\newblock {\em Proc. Amer. Math. Soc.}, 9:862--866.

\bibitem{MR4180684}
Daniel Hug and Wolfgang Weil.
\newblock {\em Lectures on convex geometry}, volume 286 of {\em Graduate Texts
  in Mathematics}.
\newblock Springer, Cham, [2020] \copyright 2020.

\bibitem{MR0237286}
Leopold Kronecker.
\newblock {\em Leopold {K}ronecker's {W}erke. {B}\"{a}nde {I}--{V}}.
\newblock Chelsea Publishing Co., New York, 1968.
\newblock Herausgegeben auf Veranlassung der K\"{o}niglich Preussischen
  Akademie der Wissenschaften von K. Hensel.

\bibitem{MR4234971}
Michael~T. Lacey, Stefanie Petermichl, Jill~C. Pipher, and Brett~D. Wick.
\newblock Notification of error: multiparameter {R}iesz commutators.
\newblock {\em Amer. J. Math.}, 143(2):333--334, 2021.

\bibitem{MR82945}
Zeev Nehari.
\newblock On bounded bilinear forms.
\newblock {\em Ann. of Math. (2)}, 65:153--162, 1957.

\bibitem{MR1949210}
Vladimir~V. Peller.
\newblock {\em Hankel operators and their applications}.
\newblock Springer Monographs in Mathematics. Springer-Verlag, New York, 2003.

\bibitem{MR953994}
Li~Zhong Peng.
\newblock Hankel operators on the {P}aley-{W}iener space in disk.
\newblock In {\em Miniconferences on harmonic analysis and operator algebras
  ({C}anberra, 1987)}, volume~16 of {\em Proc. Centre Math. Anal. Austral. Nat.
  Univ.}, pages 173--180. Austral. Nat. Univ., Canberra, 1988.

\bibitem{MR1001657}
Li~Zhong Peng.
\newblock Hankel operators on the {P}aley-{W}iener space in {$\bold R^d$}.
\newblock {\em Integral Equations Operator Theory}, 12(4):567--591, 1989.

\bibitem{MR674875}
Richard Rochberg.
\newblock Trace ideal criteria for {H}ankel operators and commutators.
\newblock {\em Indiana Univ. Math. J.}, 31(6):913--925, 1982.

\bibitem{MR878246}
Richard Rochberg.
\newblock Toeplitz and {H}ankel operators on the {P}aley-{W}iener space.
\newblock {\em Integral Equations Operator Theory}, 10(2):187--235, 1987.

\bibitem{MR0274683}
R.~Tyrrell Rockafellar.
\newblock {\em Convex analysis}.
\newblock Princeton Mathematical Series, No. 28. Princeton University Press,
  Princeton, N.J., 1970.

\bibitem{MR2463172}
Tudor Zamfirescu.
\newblock Minkowski's theorem for arbitrary convex sets.
\newblock {\em European J. Combin.}, 29(8):1956--1958, 2008.

\bibitem{MR1311028}
G\"{u}nter~M. Ziegler.
\newblock {\em Lectures on polytopes}, volume 152 of {\em Graduate Texts in
  Mathematics}.
\newblock Springer-Verlag, New York, 1995.

\end{thebibliography}

	\end{document}